\def\update{November 24,  2013}  
\newtheorem{theorem}{Theorem}[]
\newtheorem{lemma}{Lemma}
\newtheorem{corollary}{Corollary}
\newtheorem{proposition}{Proposition} 
\newtheorem*{proposition*}{Proposition} 
\def\trd{{\mathrm{trdeg}}}
\def\mathbbL{{\mathbb L}}
\def\mathbbLc{{\mathbb L}^{ \! \mathrm c}}
\def\C{{\mathbf C}}
\def\Q{{\mathbf Q}}
\def\R{{\mathbf R}}
\def\X{{\mathbf X}}
\def\Z{{\mathbf Z}}
\def\Qbar{\overline{\Q}}
\def\calE{{\mathcal{E}}}
\def\calI{{\mathcal{I}}}
\def\calJ{{\mathcal{J}}}
\def\calV{{\mathcal{V}}} 
\def\calN{{\mathcal{N}}} 
\begin{document}

\title[Liouville Numbers and Algebraic independence]{ Liouville Numbers and Schanuel's Conjecture
}
\author{K. Senthil Kumar, R. Thangadurai and M. Waldschmidt} 
\address[K. Senthil Kumar and R. Thangadurai]{Harish-Chandra Research Institute, Chhatnag Road, Jhunsi, Allahbad, 211019, India}
\address[M. Waldschmidt]{Institut de Math\'ematiques de Jussieu, 
Th\'eorie des Nombres Case courrier 247, 
Universit\'e Pierre et Marie Curie (Paris 6), 
PARIS Cedex 05 FRANCE}
\email[K. Senthil Kumar]{senthil@hri.res.in}
\email[R. Thangadurai]{thanga@hri.res.in}
\email[M. Waldschmidt]{miw@math.jussieu.fr}

\begin{abstract} 
In this paper, using an argument of P.~Erd\H{o}s, K.~Alnia\c{c}ik and \'{E}.~Saias, we extend earlier results on Liouville numbers, due to P.~Erd\H{o}s, G.J.~Rieger, W.~Schwarz, K.~Alnia\c{c}ik, \'{E}.~Saias, 
E.B.~Burger. We also produce new results of algebraic independence related with Liouville numbers and Schanuel's Conjecture, in the framework of $G_\delta$--subsets. 

\null
\hfill {\bf Update: \update }
\end{abstract}
\maketitle

\section{ Introduction}\label{Section:Introduction}

For any integer $q$ and any real number $x \in \R$, we denote by
$$
\Vert qx\Vert = \min_{m\in{ \Z}} |qx -m|
$$
the distance of $qx$ to the nearest integer. Following \'E.~Maillet \cite{Maillet}, an irrational real number $\xi$ is said to be a {\it Liouville number} if, for each integer $n\geq 1$, there exists an integer $q_n \geq 2$ such that the sequence $\bigl(u_n(\xi)\bigr)_{n\ge 1}$ of real numbers defined by 
$$
u_n(\xi) = -\frac{\log\Vert q_n\xi \Vert}{\log q_n}
$$ 
satisfies 
$\displaystyle \lim_{n\to\infty} u_n(\xi) = \infty$.
If $p_n$ is the integer such that $\Vert q_n\xi\Vert = |q_n\xi  - p_n|$, then the definition of $u_n(\xi)$ can be written
$$
\left| q_n \xi - p_n \right| = \frac{1}{q_n^{u_n(\xi)}}\cdotp
$$
An equivalent definition is to say that a Liouville number is a real number $\xi$ such that, for each integer $n\geq 1$, there exists a rational number $p_n/q_n$ with $q_n \geq 2$ such that
$$
0<\left| \xi-\frac{p_n}{q_n}\right| \le \frac{1}{q_n^n}\cdotp
$$
We denote by ${\mathbbL}$ the set of Liouville numbers. This set ${\mathbbL}$ is an uncountable, dense subset of ${\R}$ having Lebesgue measure $0$ and  
$$
\mathbbL=\bigcap_{n\ge 1}U_n
\quad
\hbox{with}
\quad 
U_n=\bigcup_{q\ge 2} \bigcup _{p\in\Z} \left(
\frac{p}{q}-\frac{1}{q^n}
\; , \; 
\frac{p}{q}+\frac{1}{q^n}
\right)\setminus
\left\{\frac{p}{q}
\right\}
\cdotp
$$
Each $U_n$ is dense, since each $p/q\in\Q$ belongs to the closure of $U_n$. Throughout this article, a $G_\delta$--subset of a topological space $X$ is defined to be the countable intersection of dense open subsets of $X$.  Baire's theorem states that in a complete or locally compact space $X$, any $G_\delta$-subset is dense. Since $\R$ is complete, we see that $\mathbbL$ is a $G_\delta$--subset of $\R$.  {\tt The} $G_\delta$--subset is also defined as a set having a complement which is {\it meager}. In our case, this complement $\mathbbLc$ is the set of non--Liouville numbers
\begin{align}\notag
\mathbbLc=\Bigl\{
x\in\R\; \mid\;
\; 
\hbox{there exists $\kappa>0$ such that }& \; 
\\
\notag 
\left|x-\frac{p}{q}\right|
&
\ge \frac{1}{q^\kappa}
\; 
\hbox{for all } \; 
\frac{p}{q}\in\Q \; \hbox{ with }\; q\ge 2
\Bigr\},
\end{align}
which has full Lebesgue measure.

In 1844, J. Liouville \cite{liouville1} proved that any element of ${\mathbbL}$ is a transcendental number.
A survey on algebraic independence results related with Liouville numbers is given in \cite{wald2}.

In 1962, P.~Erd\H{o}s \cite{erd1} proved that {\it every real number $t$ can be written as $t= \xi + \eta$ with $\xi$ and $\eta$ Liouville numbers}. He gave two proofs of this result. The first one is elementary and constructive: he splits the binary expansion of $t$ into two parts, giving rise to binary expansions of two real numbers $\xi$ and $\eta$, the sum of which is $t$. The splitting is done in such a way that both binary expansions of $\xi$ and $\eta$ have long sequences of $0$'s. The second proof is not constructive, as it relies on Baire's Theorem. In the same paper, P.~Erd\H{o}s gives also in the same way two proofs, a constructive one and another depending on Baire's Theorem, that {\it every non-zero real number $t$ can be written as $t= \xi\eta$, where $\xi $ and $ \eta$ are in ${\mathbbL}$}.
From each of these proofs, it follows that there exist uncountably many such representations $t=\xi+\eta$ (resp{.} $t=\xi\eta$) for a given $t$. 
Many authors extended this result in various ways: G.J.~Rieger in 1975 \cite{Rie}, W.~Schwarz in 1977 \cite{Schw}, K.~Alnia\c{c}ik in 1990 \cite{aln1}, K.~Alnia\c{c}ik and \'{E}.~Saias in 1994 \cite{AlniacikSaias}, E.B.~Burger in 1996 \cite{bur1} and 2001 \cite{bur2}. In \cite{bur1}, Burger extended Erd\H{o}s's result to a very large class of functions, including $f(x,y)=x+y$ and $g(x,y)=xy$.

Recall \cite{AlniacikSaias} that a real function $f:\calI\rightarrow\R$ is {\it nowhere locally constant}
if, for every nonempty open interval $\calJ$ contained in $\calI$, the restriction to $\calJ$ of $f$ is not constant. We define in a similar way a function which is {\it nowhere locally zero}: for every nonempty open interval $\calJ$ contained in $\calI$, the restriction to $\calJ$ of $f$ is not the zero function.

The main result of \cite{AlniacikSaias}, which extends the earlier results of \cite{Rie} and \cite{Schw}, deals with $G_\delta$--subsets, and reads as follows: 

\begin{proposition}[Alnia\c{c}ik--Saias]\label{Proposition:alnia}
{\it 
Let $\calI$ be an interval of $\R$ with nonempty interior, $G$ a $G_\delta$--subset of $\R$ and $(f_n)_{n\ge 0}$ a sequence of real maps on $\calI$, which are continuous and nowhere locally constant. Then 
$$
\bigcap_{n\ge 0} f_n^{-1}(G)
$$ 
is a $G_\delta$--subset of $\calI$. 
}
\end{proposition}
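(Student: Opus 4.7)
The plan is to unfold the $G_\delta$ hypothesis on $G$ and reduce the statement to a density claim for each individual preimage. Write $G=\bigcap_{m\ge 0}V_m$ where each $V_m$ is an open dense subset of $\R$. Then
$$
\bigcap_{n\ge 0} f_n^{-1}(G) = \bigcap_{n\ge 0}\bigcap_{m\ge 0} f_n^{-1}(V_m),
$$
a countable intersection indexed by $\N\times\N$. Since each $f_n\colon\calI\to\R$ is continuous, each $f_n^{-1}(V_m)$ is open in $\calI$. So the proposition will follow from Baire's theorem as soon as I show that each $f_n^{-1}(V_m)$ is \emph{dense} in $\calI$.

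To establish density, I fix $n$ and $m$ and take an arbitrary nonempty open subinterval $\calJ\subset\calI$; I must produce a point of $\calJ$ whose image under $f_n$ lies in $V_m$. Here the hypothesis ``nowhere locally constant'' enters: the continuous image $f_n(\calJ)$ is a connected subset of $\R$, hence an interval, and it is nondegenerate because $f_n$ restricted to $\calJ$ is not constant. Thus $f_n(\calJ)$ has nonempty interior in $\R$. Since $V_m$ is dense in $\R$, it meets this interior; pick $y\in V_m\cap f_n(\calJ)$ and any preimage $x\in\calJ$ with $f_n(x)=y$. Then $x\in\calJ\cap f_n^{-1}(V_m)$, proving density.

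Combining the two observations, $\bigcap_{n,m} f_n^{-1}(V_m)$ is a countable intersection of open dense subsets of $\calI$, i.e.\ a $G_\delta$-subset of $\calI$. This is exactly the required conclusion.

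The only nontrivial step is the density argument, and within it the only real ingredient is the passage from ``$f_n$ not constant on the subinterval $\calJ$'' to ``$f_n(\calJ)$ has nonempty interior'', which is immediate from continuity plus connectedness of intervals. No quantitative control is needed; the argument is purely topological and does not use any arithmetic property of the particular $G_\delta$-set $\mathbbL$ in mind. I expect no obstacle beyond bookkeeping the two indices $n$ and $m$.
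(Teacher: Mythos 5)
Your proof is correct and follows essentially the same route as the paper's argument (given there for the more general Proposition~\ref{Proposition:phiandpsiRefined}): continuity makes each preimage of an open set open, and the combination of continuity, connectedness of subintervals, and the nowhere-locally-constant hypothesis shows the image of any nonempty open subinterval is a nondegenerate interval, which must meet the dense set. The only cosmetic difference is that you verify density of each $f_n^{-1}(V_m)$ separately, while the paper verifies density of $f_n^{-1}(G)$ as a whole; both yield the required countable intersection of dense open subsets of $\calI$.
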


As pointed out by the authors of \cite{AlniacikSaias}, the proofs of several papers on this topic just reproduce the proof of Baire's Theorem. Here we use Baire's Theorem and deduce a number of consequences related with Liouville numbers in the subsequent sections. 

 In section $\ref{section:ApplicationLiouville}$, we deduce corollaries from Proposition $\ref{Proposition:alnia}$.
In section $\ref{section:ai}$, we deduce from Proposition $\ref{Proposition:phiandpsiRefined}$ some results of algebraic independence for Liouville numbers related to Schanuel's Conjecture.

 \bigskip
 
 \noindent
 {\it Acknowledgments.} The authors are thankful to the referee for valuable comments and suggestions. In particular he pointed out that most results in our paper are not specific to Liouville numbers: they hold with any $G_\delta$--subet of $\R$ instead of ${\mathbbL}$. So we paid attention to use the fact that ${\mathbbL}$ is a $G_\delta$--subset rather than other Diophantine properties. 
 
 \section{ Preliminaries}\label{Section:Preliminaries}
 



The following Proposition generalizes  Proposition $\ref{Proposition:alnia}$. We replace the interval $\calI$ by a topological space $\X$ and we replace $\R$ by an interval $\calJ$.

\begin{proposition}\label{Proposition:phiandpsiRefined}
Let $\X$ be a complete, locally connected topological space,  $\calJ$  an interval in $\R$ 
and
$\calN$ a set which is either finite or else countable.
For each $n\in\calN$, let $G_n$ be a $G_\delta$--subset of $\calJ$
and let  $f_n : \X\rightarrow \calJ$ be a continuous function which is nowhere locally constant. Then $\displaystyle\bigcap_{n\in \calN}f_n^{-1}(G_n)$ is a $G_\delta$--subset of $\X$. 
\end{proposition}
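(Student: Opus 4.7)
The plan is to reduce the statement to the standard ``countable intersection of dense open sets'' form and check two things: that each preimage is open (easy, from continuity), and that each preimage is dense (the substantive step, where local connectedness and the nowhere-locally-constant hypothesis enter).

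First I would unwind the definition: each $G_n$ is, by hypothesis, a countable intersection $G_n=\bigcap_{k\ge 1} V_{n,k}$ of dense open subsets $V_{n,k}$ of $\calJ$. Hence
\[
\bigcap_{n\in\calN} f_n^{-1}(G_n) \;=\; \bigcap_{n\in\calN}\bigcap_{k\ge 1} f_n^{-1}(V_{n,k}),
\]
which is a countable intersection (since $\calN$ is at most countable) of sets that are open in $\X$ by continuity of $f_n$. So the only thing to verify is that each $f_n^{-1}(V_{n,k})$ is \emph{dense} in $\X$.

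The key step is thus the following claim: if $V$ is a dense open subset of $\calJ$ and $f:\X\to\calJ$ is continuous and nowhere locally constant, then $f^{-1}(V)$ is dense in $\X$. To prove this, pick an arbitrary nonempty open $W\subseteq \X$. Since $\X$ is locally connected, $W$ contains a nonempty connected open subset $W'$. Then $f(W')$ is a connected subset of $\calJ\subseteq\R$, hence an interval. Because $f$ is nowhere locally constant (applied to the open set $W'$), this interval is not a single point, so its interior is a nonempty open subinterval of $\calJ$. Since $V$ is dense and open in $\calJ$, it meets this interior, so there exists $x\in W'\subseteq W$ with $f(x)\in V$, i.e.\ $x\in f^{-1}(V)\cap W$. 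This shows $f^{-1}(V)$ meets every nonempty open subset of $\X$, hence is dense.

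Applying this to every pair $(n,k)$ shows that $\bigcap_{n,k} f_n^{-1}(V_{n,k})$ is a countable intersection of dense open subsets of $\X$, which is by definition a $G_\delta$--subset of $\X$. The role of completeness of $\X$ is not needed for the statement itself (which is purely about the countable intersection of dense open sets), but it is what makes the conclusion nontrivial via Baire's theorem in subsequent applications. The main obstacle is really only the density step; it would fail without both hypotheses, since local connectedness is what lets us reduce to a \emph{connected} open piece of $\X$, and nowhere-local-constancy is what upgrades the resulting interval $f(W')$ from a point to a genuine interval that must hit the dense set $V$.
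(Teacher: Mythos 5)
Your proof is correct and follows essentially the same route as the paper's: write the set as a countable intersection of open preimages, then use local connectedness together with the nowhere-locally-constant hypothesis to show that the image of a connected open set is a nondegenerate subinterval of $\calJ$, which must meet a dense subset. The only cosmetic difference is that you apply this density argument to each dense open layer $V_{n,k}$ of $G_n$ (so density of the layers is given by definition), whereas the paper applies it to $G_n$ itself, implicitly invoking Baire's theorem on the interval $\calJ$ to know that $G_n$ is dense; your remark that completeness of $\X$ is not actually needed for the statement is likewise consistent with the paper's argument.
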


\begin{proof}
Since $\calN$ is at most countable, it is enough to prove for any $n\in\calN$ that 
 $f_n^{-1}(G_n)$ is a $G_\delta$--subset of $\X$. 

Since $f_n$ is continuous,%
$f_n^{-1}(G_n)$ is a countable intersection of open sets in $\X$. To prove it is  a $G_\delta$--subset of $\X$,  we need to prove that  $f_n^{-1}(G_n)$ is dense in $\X$, using the assumption that $f_n$ is nowhere locally constant.  Let $V$ be a connected open subset of $\X$. Since $f_n$ is continuous, $f_n(V)$ is a connected subset of $\calJ$. Since $f_n$ is nowhere locally constant, $f_n(V)$ consists of at least two elements.  Therefore, there exists an interval $(a, b)\subset \calJ$ with non-empty interior such that $(a, b) \subset f_n(V)$. Since $G_n$ is a dense subset of $\calJ$, we have $(a, b) \cap G_n \ne \emptyset$ and hence $V\cap f_n^{-1}(G_n) \ne \emptyset$ 
which proves Proposition   $\ref{Proposition:phiandpsiRefined}$.
\end{proof}


We close this section with the following Lemmas and a Corollary (quoted in  \cite{AlniacikSaias}). 

\begin{lemma}\label{lemma:GdeltaUncountable}
Let $\X$ be a (nonempty) complete metric 
space without isolated point 
and 
let $E$ be a $G_\delta$--subset of $\X$.  
%
Let $F$ be a countable subset of $E$. Then $E\setminus F$ is a $G_\delta$--subset of $\X$.
\end{lemma}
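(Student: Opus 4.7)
The plan is to exhibit $E \setminus F$ directly as a countable intersection of dense open subsets of $\X$, combining a $G_\delta$--presentation of $E$ with one factor for each removed point.

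First, since $E$ is a $G_\delta$--subset, I can write $E = \bigcap_{n\ge 1} U_n$ where each $U_n$ is open and dense in $\X$. Next, for every $f \in \X$, the singleton $\{f\}$ is closed (because $\X$ is metric, hence Hausdorff), so $V_f := \X \setminus \{f\}$ is open. The assumption that $\X$ has no isolated point is precisely what is needed to ensure that $\{f\}$ has empty interior, so $V_f$ is dense in $\X$.

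Finally, because $F$ is countable,
$$
E \setminus F \;=\; \Bigl(\bigcap_{n\ge 1} U_n\Bigr) \,\cap\, \Bigl(\bigcap_{f\in F} V_f\Bigr)
$$
is a countable intersection of dense open subsets of $\X$, hence a $G_\delta$--subset in the sense used throughout the paper. (Density of the intersection itself follows from Baire's theorem applied to the complete metric space $\X$, but it is not part of what needs to be verified: the definition adopted here only requires the countable intersection representation of dense open sets.)

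There is essentially no obstacle: the two ingredients, namely the closedness of points (from the metric hypothesis) and their non--openness (from the no--isolated--point hypothesis), make each $V_f$ a dense open set, and countability of $F$ keeps the total intersection countable. The only mild care is to note that both hypotheses on $\X$ are used, and each in exactly one place.
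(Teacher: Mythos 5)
Your proof is correct and follows essentially the same route as the paper: the paper writes $E\setminus F=\bigcap_{y\in F}\bigl(E\setminus\{y\}\bigr)$ and observes that each $E\setminus\{y\}$ is a $G_\delta$--subset because $\X$ has no isolated point, which is exactly your intersection with the dense open sets $\X\setminus\{f\}$, merely stated more compactly.
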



\begin{proof}
We have 
$$
E\setminus F=\bigcap_{y\in F} E\setminus \{y\}
$$
where each $ E\setminus \{y\}$ is a $G_\delta$--subset of $\X$ (since $\X$ has no isolated point). 
\end{proof}

Using Baire's theorem, we deduce the following Corollary 1.

\begin{corollary}\label{Corollary:uncountable}
Let $\X$ be a (nonempty) complete metric 
space without isolated point 
and 
let $E$ be a $G_\delta$--subset of $\X$.  
Then $E$ is uncountable.
\end{corollary}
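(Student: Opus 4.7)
The plan is to deduce Corollary \ref{Corollary:uncountable} from Lemma \ref{lemma:GdeltaUncountable} together with Baire's theorem, by contradiction. Suppose, aiming for a contradiction, that the $G_\delta$--subset $E$ of $\X$ is at most countable. Then we may apply Lemma \ref{lemma:GdeltaUncountable} with $F = E$ itself, which is a countable subset of $E$; the conclusion is that $E\setminus F = \emptyset$ is a $G_\delta$--subset of $\X$.

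On the other hand, by the definition of $G_\delta$--subset adopted in the paper (a countable intersection of dense open sets), Baire's theorem guarantees that in the complete metric space $\X$ every $G_\delta$--subset is dense in $\X$. Since $\X$ is nonempty, this forces every $G_\delta$--subset to be nonempty, contradicting $E\setminus F = \emptyset$. Hence $E$ must be uncountable.

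I expect the only thing that needs attention is checking that the hypotheses match up cleanly: Lemma \ref{lemma:GdeltaUncountable} requires $\X$ to be a complete metric space without isolated points (so that each singleton $\{y\}$ is closed with empty interior, making $\X\setminus\{y\}$ open and dense), while Baire's theorem requires completeness (and nonemptiness of $\X$) to yield that a $G_\delta$--set is dense and hence nonempty. Both hypotheses are assumed in the statement of the corollary, so there is no real obstacle; the argument is a direct two-step application of what has already been proved.
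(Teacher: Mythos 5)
Your argument is correct and is precisely the intended one: the paper introduces the corollary with ``Using Baire's theorem, we deduce the following Corollary,'' immediately after Lemma \ref{lemma:GdeltaUncountable}, i.e.\ take $F=E$ in the lemma and note that the empty set cannot be a $G_\delta$--subset of a nonempty complete metric space since Baire's theorem would force it to be dense. No issues with the hypotheses; this matches the paper's approach.
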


The next auxiliary lemma will be useful in section $\ref{section:ApplicationLiouville}$ (proof of Corollary $\ref{Corollary:ellVariables}$). 

\begin{lemma}\label{lemma:algind}
Let $\calI_1,\dots,\calI_n$ be non-empty open subsets of $\R$. For each $i=1,\dots,n$, let $G_i$ be a $G_\delta$--subset of $\calI_i$. Then there exists $(\xi_1,\dots,\xi_n)\in G_1\times \cdots\times G_n$ such that $\xi_1,\dots,\xi_n$ are algebraically independent (over $\Q$). 
\end{lemma}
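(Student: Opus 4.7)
The plan is to work in the product space $\X := \calI_1 \times \cdots \times \calI_n$. As a non-empty open subset of $\R^n$, $\X$ is a locally compact Hausdorff space, so Baire's theorem applies. I would then exhibit both the set
\[
\calA := \bigl\{ (\xi_1, \dots, \xi_n) \in \X : \xi_1, \dots, \xi_n \text{ are algebraically independent over } \Q \bigr\}
\]
and the product $G_1 \times \cdots \times G_n$ as $G_\delta$-subsets of $\X$ in the paper's sense (countable intersections of dense open subsets); Baire's theorem then forces their intersection to be non-empty, which yields the desired tuple.

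To see that $G_1 \times \cdots \times G_n$ is a $G_\delta$-subset of $\X$, I would write each $G_i = \bigcap_{k \ge 1} U_{i,k}$ with $U_{i,k}$ dense and open in $\calI_i$. Each slab $U_{i,k} \times \prod_{j \ne i} \calI_j$ is then dense and open in $\X$ (density being verified on basic open boxes), and
\[
G_1 \times \cdots \times G_n = \bigcap_{i=1}^n \bigcap_{k \ge 1} \Bigl( U_{i,k} \times \prod_{j \ne i} \calI_j \Bigr)
\]
exhibits it as a countable intersection of dense open subsets of $\X$.

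For $\calA$, the complement decomposes as
\[
\X \setminus \calA = \bigcup_{P} \bigl\{ x \in \X : P(x) = 0 \bigr\},
\]
where $P$ ranges over the countable set of non-zero polynomials in $\Q[X_1, \dots, X_n]$. Each such zero set is closed in $\X$ with empty interior, since a non-zero polynomial cannot vanish identically on any non-empty open subset of $\R^n$. Hence $\calA$ is a countable intersection of dense open subsets of $\X$, and Baire's theorem yields $\calA \cap (G_1 \times \cdots \times G_n) \ne \emptyset$, which is exactly what Lemma~\ref{lemma:algind} asserts. The only mildly technical ingredient is the verification that the product of dense $G_\delta$'s remains a $G_\delta$ in the product topology; the algebraic-independence part is a standard Baire-category argument that one could equally well run by induction on $n$, at each step choosing $\xi_n \in G_n$ transcendental over the countable field $\Q(\xi_1, \dots, \xi_{n-1})$.
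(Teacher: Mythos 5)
Your proof is correct, but it takes a different route from the paper's. The paper argues by induction on $n$: having chosen $(\xi_1,\dots,\xi_{n-1})$ algebraically independent, it observes that the set of $\xi_n\in\calI_n$ transcendental over the countable field $\Q(\xi_1,\dots,\xi_{n-1})$ has countable complement, hence is a $G_\delta$--subset of $\calI_n$, and intersects it with $G_n$ using Baire's theorem on the single interval $\calI_n$ --- exactly the one-coordinate-at-a-time argument you mention in your closing sentence. You instead work once and for all in the product $\X=\calI_1\times\cdots\times\calI_n$, verifying that $G_1\times\cdots\times G_n$ is a countable intersection of dense open ``slabs'' and that the algebraic-independence locus $\calA$ is a dense $G_\delta$ because each zero set of a nonzero $P\in\Q[X_1,\dots,X_n]$ is closed with empty interior in $\R^n$; both verifications are sound. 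The trade-off: the paper's induction needs only Baire on subsets of $\R$ and the triviality that a countable set has dense complement, at the cost of an induction; your version needs the (mildly technical but standard) facts about products of dense $G_\delta$'s and about real zero sets of multivariate polynomials, but in exchange it shows the stronger statement that $\calA\cap(G_1\times\cdots\times G_n)$ is itself a dense $G_\delta$--subset of $\X$, hence in particular uncountable, which is in the spirit of the rest of the paper.
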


\begin{proof}
We prove Lemma $\ref{lemma:algind}$ by induction on $n$. For $n=1$, it follows from the fact that the intersection of $G_1$ with the set of transcendental numbers is not empty. 

Assume  Lemma $\ref{lemma:algind}$  holds for $n-1$. There exists $(\xi_1,\dots,\xi_{n-1})\in G_1\times \cdots\times G_{n-1}$ such that $\xi_1,\dots,\xi_{n-1}$ are algebraically independent. The set of $\xi_n\in \calI_n$ which are transcendental over $\Q(\xi_1,\dots,\xi_{n-1})$ is a $G_\delta$--subset of $\calI_n$, hence its intersection with $G_n$ is again a $G_\delta$-subset: it is dense by Baire's theorem, and therefore not empty. 

\end{proof}

\section{Application of Proposition $\ref{Proposition:alnia}$ to Liouville numbers}\label{section:ApplicationLiouville}

Since the set of Liouville numbers is a $G_\delta$--subset in $\R$, a direct consequence of Proposition $\ref{Proposition:alnia}$  and Corollary $\ref{Corollary:uncountable}$ 
 is the following:

\begin{corollary}\label{Corollary:AS}
{\it 
Let $\calI$ be an interval of $\R$ with nonempty interior and $(f_n)_{n\ge 1}$ a sequence of real maps on $\calI$, which are continuous and nowhere locally constant. Then there exists an uncountable subset $E$ of $\calI\cap\mathbbL$
such that $f_n(\xi)$ is a Liouville number for all $n\ge 1$ and all $\xi\in E$. 
}
\end{corollary}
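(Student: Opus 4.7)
The plan is to apply Proposition \ref{Proposition:alnia} directly, exploiting the fact that the set $\mathbbL$ of Liouville numbers is itself a $G_\delta$-subset of $\R$.

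I would first augment the given sequence $(f_n)_{n\ge 1}$ by the identity map $f_0 : \calI \to \R$, $f_0(\xi) = \xi$. This is trivially continuous and nowhere locally constant, so the hypotheses of Proposition \ref{Proposition:alnia} are still satisfied by the enlarged sequence $(f_n)_{n\ge 0}$. Applying that proposition with $G = \mathbbL$ yields that
$$
E := \bigcap_{n\ge 0} f_n^{-1}(\mathbbL)
$$
is a $G_\delta$-subset of $\calI$. By construction, $\xi \in f_0^{-1}(\mathbbL)$ says exactly that $\xi \in \calI \cap \mathbbL$, while for $n\ge 1$ the condition $\xi \in f_n^{-1}(\mathbbL)$ says that $f_n(\xi)$ is a Liouville number. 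Hence $E \subset \calI \cap \mathbbL$ has the desired property.

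Uncountability of $E$ then follows from Corollary \ref{Corollary:uncountable}: any interval of $\R$ with nonempty interior is completely metrizable (either it is closed in $\R$, or it is homeomorphic to $\R$ or to a half-line) and has no isolated point, so a nonempty $G_\delta$-subset of it is uncountable. If one prefers a cleaner setting, one can restrict to the open interior of $\calI$ from the start, since the restrictions of the $f_n$ to an open subinterval remain continuous and nowhere locally constant, and $\mathbbL\cap\calI$ is dense there.

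The main obstacle is essentially bookkeeping: making sure the underlying space fits the framework of Corollary \ref{Corollary:uncountable}, and noticing the small trick of inserting the identity map $f_0$ in the sequence, which converts the two conditions ``$\xi \in \mathbbL$'' and ``$f_n(\xi) \in \mathbbL$ for all $n\ge 1$'' into a single invocation of Proposition \ref{Proposition:alnia}. All the substantive work has already been done in the preliminary section.
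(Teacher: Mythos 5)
Your proposal is correct and follows exactly the paper's own route: the paper likewise adjoins the identity map $f_0$, applies Proposition \ref{Proposition:alnia} with $G=\mathbbL$ to get that $E=\bigcap_{n\ge 0}f_n^{-1}(\mathbbL)$ is a $G_\delta$--subset of $\calI$, and invokes Corollary \ref{Corollary:uncountable} for uncountability. No substantive differences.
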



Define $f_0:\calI\rightarrow \R$ as the identity $f_0(x)=x$. 
The conclusion of the Corollary $\ref{Corollary:AS}$ is that 
$$
E = \bigcap_{n\ge 0} f_n^{-1}\bigl(\mathbbL\bigr), 
$$
a subset of $\calI$, is  uncountable. 

In this section we deduce consequences of Corollary $\ref{Corollary:AS}$. 
We first consider the special case 
where all $f_n$ are the same. 

\begin{corollary}\label{Corollaire:unseulphi}
Let $\calI$ be an interval of $\R$ with nonempty interior and $\varphi:\calI\rightarrow\R$ a continuous map which is nowhere locally constant. Then there exists an uncountable set of Liouville numbers $\xi\in\calI$ such that $\varphi(\xi)$ is a Liouville number. 
\end{corollary}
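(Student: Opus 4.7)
The plan is to obtain Corollary \ref{Corollaire:unseulphi} as the special case of Corollary \ref{Corollary:AS} in which the sequence $(f_n)_{n\geq 1}$ is taken to be constant equal to $\varphi$. By hypothesis $\varphi \colon \calI \to \R$ is continuous on $\calI$ and nowhere locally constant, so this constant sequence meets the hypotheses of Corollary \ref{Corollary:AS}. The corollary then immediately furnishes an uncountable set $E \subset \calI \cap \mathbbL$ such that $\varphi(\xi) \in \mathbbL$ for every $\xi \in E$, which is precisely the asserted conclusion.

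Unwinding this back to the underlying Proposition \ref{Proposition:phiandpsiRefined}, I would apply it with $\X = \calI$, $\calJ = \R$, $\calN = \{0,1\}$, $f_0$ equal to the identity on $\calI$, $f_1 = \varphi$, and $G_0 = G_1 = \mathbbL$. The identity map on an interval with nonempty interior is trivially continuous and nowhere locally constant, and $\varphi$ is so by assumption, so the proposition yields that
$$
E \;=\; (\calI \cap \mathbbL) \cap \varphi^{-1}(\mathbbL)
$$
is a $G_\delta$-subset of $\calI$; Corollary \ref{Corollary:uncountable} then shows that $E$ is uncountable, since $\calI$ is a Baire space without isolated points.

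The only thing to check is that the interval $\calI$ satisfies the topological hypotheses on $\X$ in Proposition \ref{Proposition:phiandpsiRefined} and Corollary \ref{Corollary:uncountable}. Since any interval of $\R$ with nonempty interior is locally connected, completely metrizable, and has no isolated points, this verification is immediate, and no genuine obstacle arises; the preparatory material of Section \ref{Section:Preliminaries} has been set up precisely so that a statement of this kind reduces to a one-line specialization.
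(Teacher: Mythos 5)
Your proposal is correct and matches the paper's own argument: the paper likewise derives this corollary by specializing Corollary \ref{Corollary:AS} (equivalently, Proposition \ref{Proposition:alnia}) to the sequence $f_0(x)=x$, $f_n=\varphi$ for $n\ge 1$ with $G=\mathbbL$, and then invokes uncountability of the resulting $G_\delta$--subset via Corollary \ref{Corollary:uncountable}. No issues.
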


One can deduce Corollary $\ref{Corollaire:unseulphi}$ directly  from Proposition $\ref{Proposition:alnia}$ 
by taking all $f_n=\varphi$ ($n\ge 0$) and $G=\mathbbL$ and by noticing that the intersection of the two $G_\delta$--subsets $\varphi^{-1}(\mathbbL)$ and $\mathbbL$ is uncountable. Another proof is to use  Proposition  $\ref{Proposition:alnia}$ 
 with $f_0(x)=x$ and $f_n(x)=\varphi(x)$ for $n\ge 1$ and $G=\mathbbL$. 

Simple examples of consequences of Corollary $\ref{Corollaire:unseulphi}$ are obtained with $\calI=(0,+\infty)$ and either $\varphi(x)=t-x$, for $t\in \R$, 
 or else with 
$\varphi(x)=t/x$, for $t\in \R^\times$, which yield Erd\H{o}s above mentioned result on the decomposition of any real number (resp.~any nonzero real number) $t$ as a sum (resp.~a product) of two Liouville numbers. 

We deduce also from Corollary $\ref{Corollaire:unseulphi}$ that any positive real number $t$ is the sum of two squares of Liouville numbers. This follows by applying Corollary $\ref{Corollaire:unseulphi}$ with 
$$
\calI
=(0,\sqrt{t})\quad\hbox{and}
\quad 
\varphi(x)=\sqrt{t-x^2}.
$$ 
Similar examples can be derived from Corollary $\ref{Corollaire:unseulphi}$ involving transcendental functions: for instance, any real number can be written $e^\xi+\eta$ with $\xi $ and $\eta$ Liouville numbers; any positive real number can be written $e^\xi+e^\eta$ with $\xi $ and $\eta$ Liouville numbers.

Using the implicit function theorem, one deduces from Corollary $\ref{Corollaire:unseulphi}$ the following generalization of Erd\H{o}s's result. 

\begin{corollary}\label{Corollary:refinementErdos}
Let $P\in\R[X,Y]$ be an irreducible polynomial such that $(\partial /\partial X) P \not=0$ and $(\partial /\partial Y) P \not=0$. Assume that there exist two nonempty open intervals $\calI$ and $\calJ$ of $\R$ such that, for any $x\in\calI$, there exists $y\in\calJ$ with $P(x,y)=0$, and, for any $y\in\calJ$, there exists $x\in\calI$ with $P(x,y)=0$. 
Then there exist uncountably many pairs $(\xi,\eta)$ of Liouville numbers in $\calI\times\calJ$ such that $P(\xi,\eta)=0$. 
\end{corollary}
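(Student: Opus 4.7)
My plan is to reduce to the one-variable situation of Corollary~\ref{Corollaire:unseulphi} by using the implicit function theorem to parametrize a small piece of the real curve $\{P=0\}$ inside $\calI\times\calJ$ as the graph of a continuous function. The first step is to locate a regular point $(x_0,y_0)\in\calI\times\calJ$, that is, one with $P(x_0,y_0)=0$ and $(\partial P/\partial Y)(x_0,y_0)\neq 0$. Since $P$ is irreducible in $\R[X,Y]$ and $\partial P/\partial Y$ is a nonzero polynomial of strictly smaller $Y$-degree, $P$ and $\partial P/\partial Y$ share no common factor, so a standard resultant argument shows that their common zero set in $\R^2$ is finite; on the other hand, by hypothesis the zero set of $P$ in $\calI\times\calJ$ surjects onto $\calI$ and is therefore uncountable, so such a regular point $(x_0,y_0)$ certainly exists.

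Next I would invoke the implicit function theorem at $(x_0,y_0)$ to obtain open subintervals $\calI_0\subset\calI$ and $\calJ_0\subset\calJ$ containing $x_0$ and $y_0$ respectively, together with a continuous function $\varphi:\calI_0\to\calJ_0$ with $\varphi(x_0)=y_0$ and $P(x,\varphi(x))=0$ for all $x\in\calI_0$. The crucial point is that $\varphi$ is nowhere locally constant: if $\varphi$ were equal to some constant $c$ on a subinterval, the univariate polynomial $P(X,c)$ would vanish on an infinite set and hence identically, so $Y-c$ would divide $P$; irreducibility then forces $P$ to be proportional to $Y-c$, contradicting $\partial P/\partial X\neq 0$.

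Finally, applying Corollary~\ref{Corollaire:unseulphi} to the continuous, nowhere locally constant map $\varphi:\calI_0\to\R$ produces an uncountable set of Liouville numbers $\xi\in\calI_0$ whose image $\eta:=\varphi(\xi)$ is also a Liouville number; each such pair $(\xi,\eta)$ lies in $\calI_0\times\calJ_0\subset\calI\times\calJ$ and satisfies $P(\xi,\eta)=0$. The main place where the hypotheses of the corollary really bite is in the nowhere-locally-constant verification for $\varphi$, which needs both irreducibility of $P$ and $\partial P/\partial X\neq 0$; the existence of a regular point is more or less automatic from irreducibility combined with $\partial P/\partial Y\neq 0$.
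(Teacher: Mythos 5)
Your proof is correct and follows essentially the same route as the paper: locate a regular point, apply the implicit function theorem to write a piece of the curve as the graph of a continuous map $\varphi$, and then invoke Corollary~\ref{Corollaire:unseulphi}. The only cosmetic difference is that the paper obtains the nowhere-locally-constant property of $\varphi$ by also constructing its local inverse $\psi$ (so $\varphi$ is injective), whereas you verify it directly from the irreducibility of $P$ and $(\partial/\partial X)P\not=0$; both are fine, and your write-up actually supplies details (existence of a regular point via the resultant) that the paper leaves implicit.
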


\begin{proof}[Proof of Corollary $\ref{Corollary:refinementErdos}$]
We use the implicit function Theorem (for instance Theorem 2 of \cite{bur1}) to deduce that there exist two differentiable functions $\varphi$ and $\psi$, defined on nonempty open subsets $\calI'$ of $\calI$ and $\calJ'$ of $\calJ$, such that $P\bigl(x,\varphi(x)\bigr)=0$ and $P\bigl(\psi(y),y\bigr)=0$ for $x\in\calI'$ and $y\in\calJ'$, and such that $\varphi\circ\psi$ is the identity on $\calJ'$ and 
$\psi\circ\varphi$ is the identity on $\calI'$. 
We then apply Corollary $\ref{Corollaire:unseulphi}$. 
\end{proof}

Erd\H{o}s's result on $t=\xi+\eta$ for $t\in\R$ follows from Corollary $\ref{Corollary:refinementErdos}$ with $P(X,Y)=X+Y-t$, while his result on $t=\xi\eta$ for $t\in\R^\times$ follows with $P(X,Y)=XY-t$. Also, the above mentioned fact that any positive real number $t$ is the sum of two squares of Liouville numbers follows by applying Corollary $\ref{Corollary:refinementErdos}$ to the polynomial $X^2+Y^2-t$.

One could also deduce, under the hypotheses of Corollary $\ref{Corollary:refinementErdos}$, the existence of  one pair of Liouville numbers $(\xi,\eta)$ with $P(\xi,\eta)=0$ by applying Theorem 1 of \cite{bur1} with $f(x,y)=P(x,y)$ and $\alpha=0$. The proof we gave 
produces an uncountable set of solutions. 

\bigskip
We extend Corollary $\ref{Corollary:refinementErdos}$ to more than $2$ variables as follows: 

\begin{corollary}\label{Corollary:ellVariables}
Let $\ell\ge 2$ and let $P\in\R[X_1,\dots,X_\ell]$ be an irreducible polynomial such that $(\partial /\partial X_1) P \not=0$ and $(\partial /\partial X_2) P \not=0$. Assume that there exist nonempty open subsets $\calI_i$ of $\R$ ($i=1,\ldots,\ell$) such that, for any $i\in\{1,2\}$ and any $(\ell-1)$--tuple $ (x_1,\dots,x_{i-1},x_{i+1},\dots,x_\ell)
\in\calI_1\times\cdots\times\calI_{i-1}\times\calI_{i+1}\times\cdots\times\calI_\ell$, 
there exists $x_i\in\calI_i$ such that $P(x_1,\ldots,x_\ell)=0$. 
Then there exist uncountably many tuples $(\xi_1,\xi_2,\ldots,\xi_\ell)\in \calI_1\times\calI_{2}\times\cdots \times\calI_\ell$ of Liouville numbers such that $P(\xi_1,\xi_2,\ldots,\xi_\ell)=0$. 
\end{corollary}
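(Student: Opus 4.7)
The plan is to reduce, via the implicit function theorem, to an application of Proposition $\ref{Proposition:phiandpsiRefined}$ on the $(\ell-1)$-dimensional parameter space, much as in the proof of Corollary $\ref{Corollary:refinementErdos}$, while simultaneously using coordinate projections to force each remaining coordinate to be a Liouville number.

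First I would locate a point $(x_1^0,\dots,x_\ell^0)\in\calI_1\times\cdots\times\calI_\ell$ at which $P=0$ and $(\partial/\partial X_1)P\neq 0$. Since $P$ is irreducible and $(\partial/\partial X_1)P$ is a nonzero polynomial of strictly smaller $X_1$-degree, the two are coprime in the UFD $\R(X_2,\dots,X_\ell)[X_1]$, so the algebraic set $S=\{P=(\partial/\partial X_1)P=0\}\subset\R^\ell$ has real dimension at most $\ell-2$. Its projection to $\R^{\ell-1}$ is therefore a proper subset of the open set $\calI_2\times\cdots\times\calI_\ell$, and one may pick $(x_2^0,\dots,x_\ell^0)\in\calI_2\times\cdots\times\calI_\ell$ outside this projection. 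The hypothesis then supplies $x_1^0\in\calI_1$ with $P(x_1^0,\dots,x_\ell^0)=0$, and by construction $(\partial/\partial X_1)P$ is nonzero there. The implicit function theorem then yields open subintervals $\calV_i\subset\calI_i$ and a continuous function $\varphi\colon\calV_2\times\cdots\times\calV_\ell\to\calV_1$ satisfying $P\bigl(\varphi(x_2,\dots,x_\ell),x_2,\dots,x_\ell\bigr)=0$ throughout $\calV_2\times\cdots\times\calV_\ell$.

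Next, I would verify that $\varphi$ is nowhere locally constant: if $\varphi$ were equal to a constant $\varphi_0$ on a nonempty open subset of $\calV_2\times\cdots\times\calV_\ell$, then the polynomial $P(\varphi_0,X_2,\dots,X_\ell)$ would vanish on a Euclidean-open subset of $\R^{\ell-1}$ and hence be identically zero. Thus $(X_1-\varphi_0)$ would divide $P$ in $\R[X_1,\dots,X_\ell]$, and irreducibility of $P$ would force $P=c(X_1-\varphi_0)$ for some $c\in\R^\times$, contradicting the hypothesis $(\partial/\partial X_2)P\neq 0$.

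Finally, I would apply Proposition $\ref{Proposition:phiandpsiRefined}$ with $\X=\calV_2\times\cdots\times\calV_\ell$ (a locally connected, completely metrizable open subset of $\R^{\ell-1}$), $\calJ$ a bounded open interval of $\R$ containing every $\calV_i$, $\calN=\{1,2,\dots,\ell\}$, $G_n=\mathbbL\cap\calJ$ for each $n$, $f_1=\varphi$, and $f_n$ the projection onto the $n$-th coordinate for $n=2,\dots,\ell$. All the $f_n$ are continuous and nowhere locally constant (the projections trivially, and $\varphi$ by the previous step), so the proposition yields a $G_\delta$-subset $E'$ of $\X$ consisting of tuples $(\xi_2,\dots,\xi_\ell)$ of Liouville numbers for which $\varphi(\xi_2,\dots,\xi_\ell)$ is also Liouville; Corollary $\ref{Corollary:uncountable}$ then shows that $E'$ is uncountable. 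For each $(\xi_2,\dots,\xi_\ell)\in E'$, the tuple $\bigl(\varphi(\xi_2,\dots,\xi_\ell),\xi_2,\dots,\xi_\ell\bigr)$ lies in $\mathbbL^\ell\cap(\calI_1\times\cdots\times\calI_\ell)$ and is a zero of $P$, and distinct tuples in $E'$ produce distinct $\ell$-tuples. The main obstacle is really the first step: the real-dimension argument guaranteeing a smooth point at which the implicit function theorem can be applied must be made carefully, since real algebraic sets can have unpredictable dimension compared to their complex counterparts; once this smooth point is secured, the rest follows cleanly from Proposition $\ref{Proposition:phiandpsiRefined}$ and Corollary $\ref{Corollary:uncountable}$.
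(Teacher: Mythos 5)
Your proof is correct, but it takes a genuinely different route from the paper's. The paper reduces to the two-variable case: by Lemma \ref{lemma:algind} it picks an $(\ell-2)$-tuple of $\Q$-algebraically independent Liouville numbers $(\xi_3,\dots,\xi_\ell)\in\calI_3\times\cdots\times\calI_\ell$, specializes to $P(X_1,X_2,\xi_3,\dots,\xi_\ell)\in\R[X_1,X_2]$, and applies Corollary \ref{Corollary:refinementErdos}. You instead work directly in $\ell$ variables: you solve for $X_1$ as a function $\varphi$ of the remaining variables via the implicit function theorem and apply Proposition \ref{Proposition:phiandpsiRefined} with $\X$ an open box in $\R^{\ell-1}$, taking $\varphi$ together with the coordinate projections as the family $(f_n)$. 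The paper's reduction is shorter, but it silently assumes that the specialized polynomial remains suitable for Corollary \ref{Corollary:refinementErdos} (irreducibility is not in general preserved under specialization; e.g.\ $X_1^2-\xi_3X_2^2$ factors over $\R$ when $\xi_3>0$); your approach avoids that issue, handles $\ell=2$ uniformly rather than as a separate base case, exhibits the solution set as a genuine $G_\delta$-subset of an $(\ell-1)$-dimensional parameter space, and is the one place where the full generality of Proposition \ref{Proposition:phiandpsiRefined} (a non-interval $\X$) actually gets used. The step you flag as delicate --- locating a zero of $P$ in $\calI_1\times\cdots\times\calI_\ell$ at which $(\partial/\partial X_1)P\neq0$ --- can be made more elementary than a real-dimension count: since $P$ is irreducible of positive $X_1$-degree and $\mathrm{char}\,\R=0$, coprimality in $\R(X_2,\dots,X_\ell)[X_1]$ gives $AP+B\,(\partial/\partial X_1)P=c$ with $c\in\R[X_2,\dots,X_\ell]\setminus\{0\}$ after clearing denominators, so the projection of $\{P=(\partial/\partial X_1)P=0\}$ to the $(x_2,\dots,x_\ell)$-space lies in the proper algebraic set $\{c=0\}$ and cannot cover the open set $\calI_2\times\cdots\times\calI_\ell$; with that point secured, the rest of your argument goes through as written.
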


\begin{proof}
For $\ell=2$, this is Corollary $\ref{Corollary:refinementErdos}$. Assume $\ell\ge 3$. Using  Lemma $\ref{lemma:algind}$, we know that there exists a $(\ell-2)$--tuple of $\Q$--algebraically independent Liouville numbers $(\xi_3, \dots,\xi_\ell)$ 
in $\calI_3\times\cdots\times\calI_\ell$.
{\tt We finally apply Corollary $\ref{Corollary:refinementErdos}$ to the polynomial $P(X_1,X_2,\xi_3,\ldots,\xi_\ell)\in\R[X_1,X_2]$.} 
\end{proof}

 In \cite{bur2}, using a counting argument together with an application of B\'{e}zout's Theorem, E.B.~Burger proved that {\it an irrational number $t$ is transcendental if and only if there exist two $\Q$--algebraically independent Liouville numbers $\xi$ and $\eta$ such that $t = \xi + \eta$. } Extending the method of \cite{bur2}, we prove:

\begin{proposition}\label{Proposition:algebraicallyindependentrepresentation}
Let $F(X,Y)\in\Q[X,Y]$ be a nonconstant polynomial with rational coefficients and $t$ a real number. Assume that there is an uncountable set of pairs of Liouville numbers $(\xi,\eta)$ such that $F(\xi,\eta)=t$. Then the two following conditions are equivalent. 
\\
$(i)$ $t$ is transcendental. 
\\
$(ii)$ there exist two $\Q$--algebraically independent Liouville numbers $(\xi,\eta)$ such that $F(\xi,\eta)=t$.
\end{proposition}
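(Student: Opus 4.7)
The plan is to handle the two implications separately. For $(ii) \Rightarrow (i)$, I would suppose $\xi, \eta$ are algebraically independent Liouville numbers with $F(\xi,\eta) = t$, and assume for contradiction that $t$ is algebraic. Since $F$ is nonconstant, $F(X,Y) - t$ is then a nonzero element of $\Qbar[X,Y]$ vanishing at $(\xi,\eta)$, which contradicts the algebraic independence of $\xi, \eta$. The standard fact used here is that algebraic independence over $\Q$ is equivalent to algebraic independence over $\Qbar$, since adjoining algebraic elements does not change transcendence degree. Hence $t$ is transcendental.

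For $(i) \Rightarrow (ii)$, I would follow the strategy of \cite{bur2}, combining a pigeonhole argument with B\'ezout's theorem. Assume $t$ is transcendental and, for contradiction, that every pair in the uncountable set
\[
S = \{(\xi,\eta) \in \mathbbL \times \mathbbL : F(\xi,\eta) = t\}
\]
is algebraically dependent over $\Q$. Clearing denominators attaches to each such pair a nonzero polynomial $Q_{\xi,\eta} \in \Z[X,Y]$ vanishing at $(\xi,\eta)$. Since $\Z[X,Y] \setminus \{0\}$ is countable while $S$ is uncountable, pigeonhole yields a single nonzero $P_0 \in \Z[X,Y]$ with $P_0(\xi,\eta) = 0$ for uncountably many $(\xi,\eta) \in S$.

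The two plane curves $F(X,Y) - t = 0$ and $P_0(X,Y) = 0$ therefore share uncountably many points in $\R^2$. If they shared no common factor in $\C[X,Y]$, affine B\'ezout would bound their intersection by $(\deg F)(\deg P_0) < \infty$; so they share an irreducible common factor $G \in \C[X,Y]$ of positive degree. Since $P_0 \in \Q[X,Y]$ and $\Qbar$ is algebraically closed, every irreducible factor of $P_0$ in $\Qbar[X,Y]$ remains irreducible in $\C[X,Y]$, so $G$ may be chosen in $\Qbar[X,Y]$. Next, the curve $G = 0$ carries a point $(a,b) \in \Qbar \times \Qbar$: if $G$ has positive degree in $Y$, pick $a \in \Qbar$ where the leading $Y$-coefficient of $G$ is nonzero and take $b$ to be any root of $G(a,Y) \in \Qbar[Y]$; if instead $G$ depends only on $X$, take $a$ to be a root of $G \in \Qbar[X]$ and set $b = 0$. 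Since $G$ divides $F - t$ in $\C[X,Y]$, substitution gives $F(a,b) = t$; but $F(a,b) \in \Qbar$, contradicting the transcendence of $t$.

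The main obstacle is the algebraic-geometric passage in the third paragraph: converting uncountably many common zeros into a common polynomial factor (B\'ezout), and then descending this factor from $\C[X,Y]$ down to $\Qbar[X,Y]$ via absolute irreducibility. Once this descent is achieved, locating an algebraic point on $G = 0$ is routine, and evaluating the rational polynomial $F$ there forces $t$ to be algebraic.
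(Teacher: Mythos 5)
Your proof is correct, and while it follows the same skeleton as the paper for the main implication (countability of $\Q[X,Y]$ plus the uncountability of the solution set to pigeonhole a single annihilating polynomial $P_0$, then B\'ezout to extract a common irreducible factor of $P_0$ and $F-t$), you finish the argument by a genuinely different route. The paper takes the common factor $B\in\Qbar[X,Y]$, divides $F(X,Y)-t$ by $B$ in $\overline{\Q(t)}[X,Y]$, shows the quotient has degree $1$ in $t$, and compares coefficients of $t$ to get $B\cdot E=-1$, contradicting irreducibility of $B$; this step is somewhat delicate (the formula given there for the coefficients of the quotient implicitly needs care). You instead descend the common factor from $\C[X,Y]$ to $\Qbar[X,Y]$ via absolute irreducibility of the $\Qbar$-irreducible factors of $P_0$, locate a point $(a,b)\in\Qbar^2$ on the curve $G=0$ (splitting into the cases $\deg_Y G>0$ and $G\in\Qbar[X]$), and evaluate $F-t=GH$ there to get $t=F(a,b)\in\Qbar$, contradicting transcendence directly. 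This is more elementary and avoids any computation in $\overline{\Q(t)}[X,Y]$. Your treatment of $(ii)\Rightarrow(i)$ also differs slightly but harmlessly: you use that $F(X,Y)-t$ is a nonzero polynomial over $\Qbar$ annihilating $(\xi,\eta)$, whereas the paper composes $F$ with the minimal polynomial of $t$ to stay over $\Q$; both are valid since algebraic independence over $\Q$ and over $\Qbar$ coincide.
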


\begin{proof}[Proof of Proposition $\ref{Proposition:algebraicallyindependentrepresentation}$]
Assume $t$ is algebraic. Therefore there exists $P(X) \in \Q[X]\setminus\{0\}$ such that $P(t) = 0$. For any pair of Liouville numbers $(\xi,\eta)$ such that $F(\xi,\eta)=t$, we have $P(F(\xi, \eta)) = 0$. Since $P\circ F\in
\Q[X,Y]\setminus\{0\}$, we deduce that the numbers $\xi$ and $\eta$ are algebraically dependent. 

Conversely, assume that for any pair of Liouville numbers $(\xi,\eta)$ such that $F(\xi,\eta)=t$, the numbers $\xi$ and $\eta$ are algebraically dependent. Since $\Q[X,Y]$ is countable and since there is an uncountable set of such pairs of Liouville numbers $(\xi,\eta)$, there exists a nonzero polynomial $A\in\Q[X,Y]$ such that $A(X,Y)$ and $F(X,Y)-t$ have infinitely many common zeros $(\xi,\eta)$. We use B\'{e}zout's Theorem. We decompose $A(X,Y)$ into irreducible factors in $\Qbar[X,Y]$, where $\Qbar$ is the algebraic closure of $\Q$. One of these factors, say $B(X,Y)$, divides $F(X,Y)-t$ in $\overline{\Q(t)}[X,Y]$, where $\overline{\Q(t)}$ denotes the algebraic closure of $\Q(t)$. 

Assume now that $t$ is transcendental. 
{\tt Write $F(X,Y)-t=B(X,Y)C(X,Y)$ where $C\in \overline{\Q(t)} [X,Y]$. The coefficient of a monomial $X^iY^j$ in $C$ is 
$$
\left(
\frac{\partial^{i+j}}{\partial X^i \partial Y^j}
\right)
\left(
\frac{F(X,Y)-t}{B(X,Y)}\right)
(0,0),
$$
hence  
$C\in \Qbar [t,X,Y]$ and  $C$ has degree $1$ in $t$, say $C(X,Y)=D(X,Y)+t E(X,Y)$, with $D$ and $E$ in $\Qbar [t,X,Y]$.
Therefore }
$B(X,Y)E(X,Y)=-1$, contradicting the fact that $B(X, Y)$ is irreducible. 
\end{proof}

Here is a consequence of Corollary $\ref{Corollary:AS}$ where a sequence of $(f_n)$ is involved, not only one $\varphi$ like in Corollary $\ref{Corollaire:unseulphi}$.

\begin{corollary}\label{Corollary:refinementErdosbis}
Let $\calE$ be a countable subset of $\R$. Then there exists an uncountable set of positive Liouville numbers $\xi$ having simultaneously the following properties. 
\\
(i) For any $t\in \calE$, the number $ \xi+t$ is a Liouville number. 
\\
(ii) For any nonzero $t\in \calE$,  the number $ \xi t$ is a Liouville number. 
\\
(iii) Let $t \in \calE$, $t\ne 0$. Define inductively $\xi_0=\xi$ and $\xi_n=e^{t \xi_{n-1}}$ for $n\ge 1$. Then all numbers of the sequence $(\xi_n)_{n\ge 0}$ are Liouville numbers. 
\\
(iv) For any rational number $r\ne 0$, the number $\xi^r$ is a Liouville number.
 \end{corollary}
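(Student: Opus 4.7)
The plan is to apply Corollary \ref{Corollary:AS} directly to a countable family of continuous, nowhere locally constant functions on the open interval $\calI=(0,+\infty)$, chosen so that the four conditions (i)--(iv) are simultaneously encoded in one and the same intersection $\bigcap_n f_n^{-1}(\mathbbL)$.

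First I would assemble the family. For each $t\in\calE$ take $f_t(x)=x+t$; for each nonzero $t\in\calE$ take $h_t(x)=xt$; for each nonzero $t\in\calE$ and each integer $n\ge 1$ define inductively $g_{t,0}(x)=x$ and $g_{t,n}(x)=e^{t\,g_{t,n-1}(x)}$; finally for each nonzero $r\in\Q$ take $p_r(x)=x^r$, which is well defined on $(0,+\infty)$. Since $\calE$ and $\Q$ are countable, this is a countable collection of continuous maps $\calI\to\R$. Each is strictly monotonic on $\calI$, and hence nowhere locally constant: the claim for $x+t$, $xt$ and $x^r$ is immediate, while for $g_{t,n}$ it follows by induction, since a composition of continuous strictly monotonic real functions is continuous and strictly monotonic (multiplication by the nonzero constant $t$ and the strictly increasing map $\exp$ both preserve strict monotonicity, possibly reversing its direction).

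Corollary \ref{Corollary:AS} then produces an uncountable set $E\subset\calI\cap\mathbbL$ such that every function in the family sends $E$ into $\mathbbL$. Any $\xi\in E$ is therefore a positive Liouville number, and the values $\xi+t=f_t(\xi)$, $\xi t=h_t(\xi)$ and $\xi^r=p_r(\xi)$ lie in $\mathbbL$, which gives (i), (ii) and (iv) at once. For (iii), fix $t\in\calE\setminus\{0\}$; a straightforward induction on $n$ identifies $\xi_n$ with $g_{t,n}(\xi)$, and these all belong to $\mathbbL$ by construction.

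The only step that is not purely formal is verifying that the iterated exponentials $g_{t,n}$ are nowhere locally constant, and I expect this to be the main (very mild) obstacle; but as sketched above it reduces to preservation of strict monotonicity under composition with $\exp$ and with scaling by $t\ne 0$, which handles both signs of $t$ in one line of induction. Everything else is a direct invocation of Corollary \ref{Corollary:AS}.
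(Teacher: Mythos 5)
Your proof is correct and follows essentially the same route as the paper: the authors also collect the same four countable families of maps on $(0,+\infty)$, enumerate their union, and apply Corollary \ref{Corollary:AS}. Your explicit verification that the iterated exponentials are nowhere locally constant (via strict monotonicity) is a detail the paper leaves implicit, but it does not change the argument.
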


 \begin{proof}[Proof of Corollary $\ref{Corollary:refinementErdosbis}$]
 Each of the four following sets of continuous real maps defined on $\calI=(0,+\infty)$ is countable, hence their union is countable. We enumerate the elements of the union and we apply Corollary $\ref{Corollary:AS}$.
 
 The first set consists of the maps $x\mapsto x+t$ for $t\in \calE$. 
 
The second set consists of the maps $x\mapsto xt$ for $t\in \calE$, $t\ne 0$. 
 
 The third set consists of the maps $\varphi_n$ defined inductively by $\varphi_0(x)=x$, $\varphi_n(x)=e^{t\varphi_{n-1}(x)}$ ($n\ge 1$). 
 
 The fourth set consists of the maps $\varphi_r(x) = x^r$ for any rational number $r \ne 0$.
 \end{proof}

 In \cite{Maillet}, \'{E}.~Maillet gives a necessary and sufficient condition for a positive Liouville number $\xi$ to have a $p$--th root (for a given positive integer $p>1$) which is also a Liouville number: among the convergents in the continued fraction expansion of $\xi$, infinitely many should be $p$--th powers. He provides explicit examples of Liouville numbers having a $p$--th root which is not a Liouville number. 
 
  Let $\calI$ be an interval of $\R$ with nonempty interior and $\varphi:\calI\rightarrow \calI$ a continuous bijective map (hence  $\varphi$ is nowhere locally constant). Let $\psi:\calI\rightarrow\calI$ denotes the inverse bijective map of $\varphi$. For $n\in\Z$, we denote by 
$\varphi^n $ the  bijective map $\calI\rightarrow\calI$ defined inductively as usual: $\varphi^0$ is the identity, $\varphi^{n}=\varphi^{n-1}\circ\varphi$ for $n\ge 1$, and $\varphi^{-n}=\psi^n$ for $n\ge 1$.  
 
Here is a further consequence of Corollary $\ref{Corollary:AS}$. 
 
 \begin{corollary}\label{Corollary:orbitsLiouville}
 Let $\calI$ be an interval of $\R$ with nonempty interior and $\varphi:\calI\rightarrow \calI$ a continuous bijective map.  Then the set of elements $\xi$ in $\calI$ such that the orbit $\{\varphi^n(\xi)\; \mid n\in\Z\}$ consists only of Liouville numbers in $\calI$ is a $G_\delta$--subset of $\calI$, hence is uncountable.
 \end{corollary}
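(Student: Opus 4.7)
The plan is to set up the problem so that Proposition~$\ref{Proposition:phiandpsiRefined}$ can be applied directly with the countable family of iterates $(\varphi^n)_{n\in\Z}$ playing the role of the $f_n$'s, and the Liouville set (intersected with $\calI$) playing the role of each $G_n$. The target set is
$$
E=\bigcap_{n\in\Z}(\varphi^n)^{-1}\bigl(\mathbbL\cap\calI\bigr),
$$
since $\xi\in E$ is exactly the condition that every element of the two--sided orbit $\{\varphi^n(\xi)\mid n\in\Z\}$ belongs to $\mathbbL$.

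The first thing I would check is that each $\varphi^n$ is a legitimate input for the proposition, i.e., that it is a continuous map $\calI\to\calI$ which is nowhere locally constant. The key classical fact is that a continuous bijection from an interval to itself is strictly monotonic, hence a homeomorphism (its inverse being automatically continuous on an interval). Consequently $\psi=\varphi^{-1}$ is continuous, and by induction every $\varphi^n$ for $n\in\Z$ is a continuous bijection $\calI\to\calI$. Being injective, it is certainly nowhere locally constant. This is the only nontrivial verification, and I expect it to be the main obstacle only insofar as one has to remember to invoke monotonicity to get continuity of the inverse; once that is in hand everything else is formal.

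Next I would record that $\mathbbL\cap\calI$ is a $G_\delta$--subset of $\calI$: from the representation $\mathbbL=\bigcap_{n\ge 1}U_n$ with each $U_n$ open and dense in $\R$, one sees that $\mathbbL\cap\calI=\bigcap_{n\ge 1}(U_n\cap\calI)$, and each $U_n\cap\calI$ is open in $\calI$ and dense in $\calI$ (because the interior of $\calI$ is dense in $\calI$ and $U_n$ is dense in $\R$). I would also note that $\calI$, being an interval with nonempty interior, is locally connected and (being a $G_\delta$ in $\R$, or alternatively locally compact Hausdorff) is a Baire space, in fact completely metrizable, with no isolated points.

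Having verified the hypotheses, I apply Proposition~$\ref{Proposition:phiandpsiRefined}$ with $\X=\calI$, $\calJ=\calI$, $\calN=\Z$, $G_n=\mathbbL\cap\calI$, and $f_n=\varphi^n$. The conclusion is that $E$ is a $G_\delta$--subset of $\calI$. Finally, Corollary~$\ref{Corollary:uncountable}$ applied to $E$ as a $G_\delta$--subset of the (nonempty) complete metrizable space $\calI$ without isolated points yields that $E$ is uncountable, which completes the proof.
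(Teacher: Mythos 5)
Your proof is correct and follows exactly the same route as the paper, which simply applies Proposition~$\ref{Proposition:phiandpsiRefined}$ with $\X=\calJ=\calI$, $\calN=\Z$, $G_n=\mathbbL\cap\calI$ and $f_n=\varphi^n$. The only difference is that you spell out the hypothesis checks (monotonicity giving continuity of $\varphi^{-1}$, injectivity giving nowhere local constancy, and $\calI$ being completely metrizable) that the paper leaves implicit.
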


 \begin{proof}[Proof of corollary $\ref{Corollary:orbitsLiouville}$]
 In Proposition $\ref{Proposition:phiandpsiRefined}$, take $X = \calI$, $\calN = \Z$,  $G_n = \mathbbL\cap \calI$ and $f_n = \varphi^n$ for each $n\in \Z$. 
  
 
 \end{proof}


 \section{Algebraic independence}\label{section:ai}


Schanuel's Conjecture states that, given $\Q$--linearly independent complex numbers $x_1,\ldots,x_n$, the transcendence degree over $\Q$ of the field 
\begin{equation}\label{Equation:Schanuel}
\Q(x_1,\ldots,x_n,e^{x_1},\ldots,e^{x_n})
\end{equation}
is at least $n$. One may ask whether the transcendence degree is at least $n+1$ when the following additional assumption is made: for each $i=1,\dots,n$, one at least of the two numbers $x_i$, $e^{x_i}$ is a Liouville number. 

We will show that for each pair of integers $(n,m)$ with $n\geq m\geq 1$, there exist uncountably many tuples $\xi_1,\ldots,\xi_n$ consisting of $\Q$--linearly independent real numbers, 
such that 
the numbers $\xi_1,\ldots,\xi_n,e^{\xi_1},\ldots,e^{\xi_n}$ are all Liouville numbers, and 
the transcendence degree of the field $(\ref{Equation:Schanuel})$ is $n+m$.

\medskip 

 For $L$ a field and $K$ a subfield of $L$, we denote by $\trd_K L$ the transcendence degree of $L$ over $K$. 
 
 \begin{theorem}\label{Th:SchanuelLiouville}
 Let $n\geq 1$ and $1\le m\leq n$ be given integers. Then there exist uncountably many $n$-tuples $(\alpha_1, \dots, \alpha_n) \in \mathbbL^n$ such that $\alpha_1,\ldots,\alpha_n$ are linearly independent over $\Q$, $e^{\alpha_i} \in \mathbbL$ for all $i = 1, 2, \dots, n$ and 
$$
\trd_\Q\Q(\alpha_1, \dots, \alpha_n, e^{\alpha_1}, \dots, e^{\alpha_n}) = n+m.
$$
\end{theorem}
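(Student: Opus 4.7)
The plan is to parametrize solutions by $m$-tuples $(t_1,\ldots,t_m)\in\R^m$ lying in a carefully chosen uncountable $G_\delta$-subset. Fix once and for all real algebraic irrationals $c_1,\ldots,c_{n-m}$ such that $1,c_1,\ldots,c_{n-m}$ are linearly independent over $\Q$ (for instance $c_j=\sqrt{p_j}$ with $p_j$ distinct primes) and set $\alpha_i=t_i$ for $1\le i\le m$ and $\alpha_{m+j}=c_j t_1$ for $1\le j\le n-m$. Since each $c_j t_1$ is algebraic over $\Q(t_1)$, this construction automatically yields $\trd_\Q\Q(\alpha_1,\ldots,\alpha_n,e^{\alpha_1},\ldots,e^{\alpha_n})\le n+m$; the remaining task is to choose $(t_1,\ldots,t_m)$ so that all Liouville conditions hold \emph{and} the $n+m$ numbers $t_1,\ldots,t_m,e^{t_1},\ldots,e^{t_m},e^{c_1 t_1},\ldots,e^{c_{n-m}t_1}$ are algebraically independent over $\Q$.

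For the Liouville side, I apply Proposition $\ref{Proposition:phiandpsiRefined}$ with $\X=\R^m$ (complete and locally connected), $\calJ=\R$, each $G_n=\mathbbL$, and the $2n$ continuous nowhere-locally-constant maps $t\mapsto t_i$, $t\mapsto c_j t_1$, $t\mapsto e^{t_i}$, $t\mapsto e^{c_j t_1}$ for $1\le i\le m$ and $1\le j\le n-m$. This produces a $G_\delta$-subset $E_1\subset\R^m$ of those $t$ for which $\alpha_1,\ldots,\alpha_n$ and $e^{\alpha_1},\ldots,e^{\alpha_n}$ are all Liouville.

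For the algebraic independence, observe that the $n$ linear forms $z_1,\ldots,z_m,c_1 z_1,\ldots,c_{n-m} z_1$ in $z_1,\ldots,z_m$ are linearly independent over $\Q$ (using that $1,c_1,\ldots,c_{n-m}$ are). The classical fact that exponentials of pairwise distinct linear forms are linearly independent over $\C(z_1,\ldots,z_m)$ then gives the algebraic independence over $\C(z_1,\ldots,z_m)$ of the exponentials of $\Q$-linearly independent linear forms; combined with the trivial algebraic independence of $z_1,\ldots,z_m$ over $\Q$, this shows that the $n+m$ entire functions $z_1,\ldots,z_m,e^{z_1},\ldots,e^{z_m},e^{c_1 z_1},\ldots,e^{c_{n-m}z_1}$ on $\C^m$ are algebraically independent over $\Q$. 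Consequently, for every nonzero $Q\in\Q[X_1,\ldots,X_{n+m}]$, the entire function
$$
F_Q(z)=Q\bigl(z_1,\ldots,z_m,e^{z_1},\ldots,e^{z_m},e^{c_1 z_1},\ldots,e^{c_{n-m}z_1}\bigr)
$$
is not identically zero on $\C^m$, so by analytic continuation $\{F_Q=0\}\cap\R^m$ is closed with empty interior. Indexing over the countable set of nonzero $Q$, the intersection $E_2=\bigcap_Q\{F_Q\neq 0\}$ is a countable intersection of open dense subsets of $\R^m$, hence a dense $G_\delta$-subset.

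Finally, $E=E_1\cap E_2$ is a $G_\delta$-subset of $\R^m$, dense by Baire's theorem and uncountable by Corollary $\ref{Corollary:uncountable}$ (since $\R^m$ is a complete metric space without isolated point). For any $(t_1,\ldots,t_m)\in E$ the Liouville properties follow from $E_1$; the equality $\trd=n+m$ follows from $E_2$ combined with the upper bound above; and the $\Q$-linear independence of $\alpha_1,\ldots,\alpha_n$ follows from the $\Q$-linear independence of $t_1,\ldots,t_m$ (ensured by $E_2$ applied to polynomials in $X_1,\ldots,X_m$ only) together with that of $1,c_1,\ldots,c_{n-m}$. Distinct $t$'s yield distinct $n$-tuples $(\alpha_1,\ldots,\alpha_n)$, so uncountability of $E$ transfers to the set of solutions. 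The principal obstacle I anticipate is the nonvanishing of the $F_Q$, which rests on the classical $\C(z_1,\ldots,z_m)$-linear independence of exponentials with pairwise distinct linear exponents.
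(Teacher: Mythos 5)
Your proof is correct, but it takes a genuinely different route from the paper's. The paper argues by induction on $m$: for the base case $m=1$ it takes the $\Q$-linearly independent Liouville numbers to be $\alpha,\alpha^2,\dots,\alpha^n$ (powers of a single Liouville number) and uses the one-variable fact that $z,e^z,e^{z^2},\dots,e^{z^n}$ are algebraically independent over $\C$ (its Corollary \ref{corollary:indalgexp}) to make $\alpha,e^\alpha,\dots,e^{\alpha^n}$ algebraically independent Liouville numbers; the inductive step passes from $(n-1,m-1)$ to $(n,m)$ by adjoining one new $\alpha\in\mathbbL$ with $e^\alpha\in\mathbbL$ and $\alpha,e^\alpha$ algebraically independent over the field already constructed. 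You instead give a one-shot construction parametrized by a dense $G_\delta$-subset of $\R^m$, padding from $m$ to $n$ coordinates with the algebraic multiples $c_jt_1$ so that only the exponentials $e^{c_jt_1}$ contribute new transcendence. Both arguments are sound; yours is more uniform and makes the count of uncountably many tuples transparent, but it leans on two multivariable facts not established in the paper (that an entire function on $\C^m$ not identically zero has real zero set with empty interior, and that exponentials of $\Q$-linearly independent linear forms are algebraically independent over $\C(z_1,\dots,z_m)$) — both standard, and the second provable exactly as in the paper's Proposition \ref{Proposition:algIndPoly} using a directional derivative, whereas the paper stays entirely within one complex variable. One small point to tighten: to get $\Q$-linear independence of $\alpha_1,\dots,\alpha_n$ you need the $t_i$ to be linearly independent over $\Q(c_1,\dots,c_{n-m})$, not merely over $\Q$; this does follow from their algebraic independence over $\Q$ (hence over $\Qbar$) supplied by $E_2$, but it deserves a sentence.
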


\noindent{\bf Remark.} Theorem $\ref{Th:SchanuelLiouville}$ is tight when $n=1$: the result does not hold for $m=0$. Indeed, since the set of $\alpha$ in $\mathbbL$ such that $\alpha$ and $e^\alpha$ are algebraically dependent over $\Q$ is countable, one cannot get uncountably many $\alpha\in\mathbbL$ such that $\trd_{\Q}\Q(\alpha, e^\alpha) =1$. 

We need an auxiliary result (Corollary $\ref{corollary:indalgexp}$). Corollary $\ref{corollary:indalgexp}$ will be deduced from the following Proposition $\ref{Proposition:algIndPoly}$. 

\begin{proposition}\label{Proposition:algIndPoly}
\null\hfill\break
$(1)$ 
 Let $g_1,g_2,\dots, g_n$ be polynomials in $\C[z]$. Then the two following conditions are equivalent.
 \\
$ (i)$ For $1\le i<j\le n$, the function $g_i-g_j$ is not constant. 
 \\
 $(ii)$ The functions $e^{g_1},\dots, e^{g_n}$ are linearly independent over $\C(z)$.
 \\
$(2)$ Let $f_1, f_2,\dots, f_m$ be polynomials in $\C[z]$. Then the two following conditions are equivalent.
 \\
$(i)$ For any nonzero tuple $(a_1,\dots, a_m)\in \Z^m$, the function $a_1f_1+\cdots+a_mf_m$ is not constant. 
 \\
$(ii)$ The functions $e^{f_1}$, $\dots$, $e^{f_m}$
 are algebraically independent over $\C(z).$
\end{proposition}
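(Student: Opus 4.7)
For part (1), the direction $(ii)\Rightarrow(i)$ is immediate: if $g_i-g_j=c$ is constant for some $i<j$, then $e^{g_i}-e^c e^{g_j}=0$ is a nontrivial $\C(z)$--linear relation. For the converse I plan the classical shortening argument for linear relations among exponentials of polynomials. Assuming a nontrivial relation exists, clear denominators and pick $\sum_{i=1}^k P_i e^{g_i}=0$ with $P_i\in\C[z]\setminus\{0\}$ and $k\ge 2$ minimal. Dividing by $P_k e^{g_k}$ yields $\sum_{i=1}^{k-1} R_i e^{g_i-g_k}+1=0$ with $R_i=P_i/P_k\in\C(z)^\times$. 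Differentiating kills the constant term, and multiplying back by $e^{g_k}$ produces the shorter relation $\sum_{i=1}^{k-1}\bigl(R_i'+R_i(g_i'-g_k')\bigr)e^{g_i}=0$. By minimality, each coefficient must vanish, giving $R_i'/R_i=(g_k-g_i)'$ and hence $R_i=C_i e^{g_k-g_i}$ for some $C_i\in\C^\times$. Since $R_i\in\C(z)$, the function $e^{g_k-g_i}$ must be rational; but the exponential of a nonconstant polynomial has an essential singularity at infinity and is never rational, so $g_k-g_i$ is constant, contradicting (i).

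For part (2), $(ii)\Rightarrow(i)$ is immediate: if $\sum_i a_i f_i=c$ for some nonzero $(a_1,\dots,a_m)\in\Z^m$, then separating positive and negative exponents yields the polynomial identity $\prod_{a_i>0}(e^{f_i})^{a_i}=e^c\prod_{a_j<0}(e^{f_j})^{-a_j}$ over $\C\subset\C(z)$. Conversely, suppose the $e^{f_i}$ satisfy a nontrivial polynomial relation over $\C(z)$, and write it as $\sum_{\alpha} P_\alpha(z)\,e^{g_\alpha}=0$ where $\alpha$ ranges over a finite nonempty subset of $\Z_{\ge 0}^m$, $g_\alpha=\sum_i \alpha_i f_i$, and each $P_\alpha\in\C(z)\setminus\{0\}$. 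This is a nontrivial $\C(z)$--linear relation among the functions $e^{g_\alpha}$, so by part (1) there exist distinct $\alpha\ne\beta$ with $g_\alpha-g_\beta=\sum_i(\alpha_i-\beta_i)f_i$ constant; setting $a_i=\alpha_i-\beta_i$ produces a nonzero integer tuple contradicting (i).

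The main obstacle is the minimality step in (1): I must verify that the differentiated relation is genuinely of shorter length (equivalently, that some coefficient $R_i'+R_i(g_i'-g_k')$ does not force a spurious vanishing), and then convert the resulting first-order ODE $R_i'/R_i=(g_k-g_i)'$ into the rigid conclusion that the polynomial $g_k-g_i$ is constant. Once part (1) is secured, part (2) reduces to the purely formal observation that any algebraic relation over $\C(z)$ among $e^{f_1},\dots,e^{f_m}$ expands into a $\C(z)$--linear relation among exponentials of $\Z_{\ge 0}$--combinations of the $f_i$, which is the case already handled.
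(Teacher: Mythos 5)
Your proof is correct, and for the key direction $(i)\Rightarrow(ii)$ of part $(1)$ it takes a genuinely different route from the paper. The paper argues by induction on $n$: it multiplies a putative relation $\sum_i A_i e^{g_i}=0$ by $e^{-g_n}$, notes that $D^t\bigl(A_ie^{h_i}\bigr)=A_{it}e^{h_i}$ with $A_{it}\neq 0$ because $h_i=g_i-g_n$ is nonconstant, and applies $D^N$ with $N>\deg A_n$ to annihilate the free polynomial term, reducing to a relation of length $n-1$. You instead take a relation of minimal length $k\ge 2$, normalize by $P_ke^{g_k}$, differentiate once, and use minimality to force every coefficient of the shortened relation to vanish; the resulting equations $R_i'/R_i=(g_k-g_i)'$ give $R_i=C_ie^{g_k-g_i}$, and the contradiction comes from the fact that $e^{g_k-g_i}$ cannot be a rational function unless $g_k-g_i$ is constant. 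Both arguments are complete; the paper's stays entirely inside $\C[z]$ and needs only an elementary degree count to see $A_{it}\neq 0$, while yours trades the $N$-fold differentiation for a single one at the cost of solving an ODE in $\C(z)$ and justifying the non-rationality of $e^h$ for nonconstant polynomial $h$ --- your appeal to the essential singularity at infinity does this and is not circular, though the cleanest phrasing is that a rational function which is entire and nonvanishing on $\C$ must be constant. Your treatment of part $(2)$ coincides with the paper's in substance (you state it as a contrapositive, the paper directly): group the monomials of $P(e^{f_1},\dots,e^{f_m})$ by the exponent $g_\alpha=\sum_i\alpha_if_i$ and observe that hypothesis $(i)$ of $(2)$ for the $f_i$ is exactly hypothesis $(i)$ of $(1)$ for the $g_\alpha$.
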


Since the functions $1,z,z^2,\dots,z^m,\dots$ are linearly independent over $\C$, we deduce from $(2)$: 

\begin{corollary}\label{corollary:indalgexp}
 The functions
 $$
 z,\; e^z,\; e^{z^2},\; \dots, \; e^{z^m},\dots
 $$
 are algebraically independent over $\C.$
\end{corollary}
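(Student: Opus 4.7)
The plan is to reduce the infinite statement to its finite approximations and apply part (2) of Proposition~\ref{Proposition:algIndPoly}. By definition, the functions $z, e^z, e^{z^2}, \dots$ are algebraically independent over $\C$ if and only if every finite subfamily is, so it suffices to fix $m \ge 1$ and show that $z, e^z, e^{z^2}, \dots, e^{z^m}$ are algebraically independent over $\C$.

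To do this, I would first apply Proposition~\ref{Proposition:algIndPoly}(2) to the polynomials $f_i(z) = z^i$ for $i = 1, \dots, m$. The hypothesis to check is that for every nonzero tuple $(a_1, \dots, a_m) \in \Z^m$, the polynomial $a_1 z + a_2 z^2 + \cdots + a_m z^m$ is non-constant; this is immediate, since $z, z^2, \dots, z^m$ are linearly independent over $\C$ (they are nonzero monomials of distinct positive degrees) and no nonzero linear combination of them is the constant polynomial. Proposition~\ref{Proposition:algIndPoly}(2) therefore yields that $e^z, e^{z^2}, \dots, e^{z^m}$ are algebraically independent over $\C(z)$, i.e.
$$
\trd_{\C(z)} \C(z, e^z, e^{z^2}, \dots, e^{z^m}) = m.
$$

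Since $z$ is transcendental over $\C$, we have $\trd_\C \C(z) = 1$. By additivity of transcendence degree in the tower $\C \subset \C(z) \subset \C(z, e^z, e^{z^2}, \dots, e^{z^m})$, we conclude
$$
\trd_\C \C(z, e^z, e^{z^2}, \dots, e^{z^m}) = m+1,
$$
which means precisely that $z, e^z, e^{z^2}, \dots, e^{z^m}$ are algebraically independent over $\C$. As $m$ was arbitrary, the infinite family $z, e^z, e^{z^2}, \dots, e^{z^m}, \dots$ is algebraically independent over $\C$.

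There is no substantive obstacle here: the entire argument is a direct invocation of Proposition~\ref{Proposition:algIndPoly}(2), together with the tower law for transcendence degree. The only point requiring any care is the reduction from algebraic independence over $\C(z)$ to algebraic independence over $\C$, which is where one must use that $z$ itself is transcendental over $\C$ in order to upgrade the transcendence degree from $m$ to $m+1$.
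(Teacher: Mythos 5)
Your proposal is correct and follows exactly the paper's route: the paper deduces the corollary from Proposition~\ref{Proposition:algIndPoly}(2) applied to $f_i=z^i$, citing the linear independence of $1,z,z^2,\dots$ over $\C$ to verify hypothesis $(i)$. You merely make explicit the final tower-law step (passing from algebraic independence of the exponentials over $\C(z)$ to algebraic independence of the whole family, including $z$, over $\C$), which the paper leaves implicit.
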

\bigskip

For the proof of Proposition $\ref{Proposition:algIndPoly}$, we introduce the quotient vector space $\calV=\C[z]/\C$ and the canonical surjective linear map $s:\C[z]\rightarrow \calV$ with kernel $\C$. Assertion $(i)$ in $(1)$ means that 
$s(g_1),\dots,s(g_n)$ are pairwise distinct, while assertion $(i)$ in $(2)$ means that 
$s(f_1),\dots,s(f_m)$ are linearly independent over $\Q$. 

\begin{proof}[Proof of $(1)$]
\null\hfill\break\noindent
$(ii)$ implies $(i)$. If $g_1-g_2$ is a constant $c$, then $(e^{g_1},e^{g_2})$ is a zero of the linear form $X_1-e^cX_2$. 

\null\hfill\break\noindent
$(i)$ implies $(ii)$.
We prove this result by induction on $n$. For $n=1$, there is no condition on $g_1$, the function $e^{g_1}$ is not zero, hence the result is true. Assume $n\ge 2$ and assume that for any $n'<n$, the result holds with $n$ replaced by $n'$. Let $A_1,\ldots,A_n$ be polynomials in $\C[z]$, not all of which are zero; consider the function 
$$
G(z)=
A_1(z)e^{g_1(z)}+\cdots+ A_n(z)e^{g_n(z)}.
$$
The goal is to check that $G$ is not the zero function. Using the induction hypothesis, we may assume $A_i\not=0$ for $1\le i\le n$. Define $h_i=g_i-g_n$ $(1\le i\le n)$ and $H=e^{-g_n}G$:
$$
H(z)=
A_1(z)e^{h_1(z)}+\cdots+ A_{n-1}(z)e^{h_{n-1}(z)}+ A_n(z).
$$
From $h_i-h_j=g_i-g_j$, we deduce that $s(h_1),\dots,s(h_{n-1})$ are distinct in $\calV$. 
Write $D=d/dz$ and let $N>\deg A_n$, so that $D^NA_n=0$. Notice that for $i=1,\dots,n-1$ and for $t\ge 0$, we can write 
$$
D^t \bigl(A_i(z)e^{h_i(z)}
\bigr)
=A_{it}(z)e^{h_i(z)}
$$
where $A_{it}$ is a nonzero polynomial in $\C[z]$. By the induction hypothesis, the function 
$$
D^NH(z)=A_{1, N}(z)e^{h_1(z)}+\cdots+ A_{n-1,N}(z)e^{h_{n-1}(z)} 
$$
is not the zero function, hence $G\not=0$. 

\end{proof}

\begin{proof}[Proof of $(2)$]
\null\hfill\break\noindent
$(ii)$ implies $(i)$. If there exists
$(a_1,\dots,a_m)\in \Z^m\setminus\{(0,\dots,0)\}$ such that the function $a_1f_1+\cdots+a_mf_m$ is a constant $c$, then for the polynomial 
$$
P(X_1,\dots,X_m)=\prod_{a_i>0} X_i^{a_i} -e^c \prod_{a_i<0} X_i^{|a_i|} 
$$
we have $P(e^{f_1},\dots,e^{f_m})=0$, therefore the functions $e^{f_1},\dots,e^{f_m}$ are algebraically dependent over $\C$ (hence over $\C(z)$). 

\null\hfill\break\noindent
$(i)$ implies $(ii)$. 
Consider a nonzero polynomial 
$$
P(X_1,\dots,X_m)=
\sum_{\lambda_1=0}^{d_1}\cdots
\sum_{\lambda_m=0}^{d_m}
p_{\lambda_1,\dots,\lambda_m} (z)X_1^{\lambda_1}\cdots X_m^{\lambda_m}\in\C[z,X_1,\dots,X_m]
$$
and let $F$ be the entire function $F=P(e^{f_1},\dots,e^{f_m})$. Denote by $\{g_1,\dots,g_n\}$ the set of functions $\lambda_1f_1+\cdots+\lambda_mf_m$ with $p_{\lambda_1,\dots,\lambda_m}\not=0$. For $1\le i\le n$, set
$$
A_i(z)=p_{\lambda_1,\dots,\lambda_m} (z)\in\C[z]
$$
where $(\lambda_1,\dots,\lambda_m)$ is defined by $g_i=\lambda_1f_1+\cdots+\lambda_mf_m$,  so that
$$
F(z)=A_1(z) e^{g_1(z)}+\cdots+A_n(z) e^{g_n(z)}.
$$
The assumption $(i)$ of $(2)$ on $f_1,\dots,f_m$ implies that the functions $g_1,\dots,g_n$ satisfy the assumption $(i)$ of $(1)$, hence the function $F$ is not the zero function. 

\end{proof}

\noindent
{\bf Remark.}
We deduced $(2)$ from $(1)$. We can also deduce $(1)$ from $(2)$ as follows. Assume that $(ii)$ in $(1)$ is not true, meaning that the functions $e^{g_1},\dots,e^{g_n}$ are linearly dependent over $\C(z)$: there exist polynomials $A_1,\dots,A_n$, not all of which are zero, such that the function 
$$
G(z)=A_1(z)e^{g_1(z)}+\cdots + A_n(z)e^{g_n(z)}
$$
is the zero function. 
Consider a set $f_1,\dots,f_m$ of polynomials such that $s(f_1)$,$\dots$, $s(f_m)$ is a basis of the $\Q$--vector subspace of $\calV$ spanned by $s(g_1),\dots,s(g_n)$. Dividing if necessary all $f_j$ by a positive integer, we may assume 
$$
s(g_i)=\sum_{j=1}^m \lambda_{ij} s(f_j) \quad (1\le i\le n)
$$
with $\lambda_{ij}\in\Z$. This means that 
$$
c_i=g_i-\sum_{j=1}^m \lambda_{ij} f_j \quad (1\le i\le n)
$$
are constants. Consider the polynomial 
$$
P(X_1,\ldots,X_m)=\sum_{i=1}^n A_i(z) e^{c_i} \prod_{j=1}^m X_j^{\lambda_{ij}}.
$$
From $P(e^{f_1},\ldots,e^{f_m})=G=0$ and from $(2)$ we deduce that this polynomial is $0$, hence the monomials 
$$
\prod_{j=1}^m X_j^{\lambda_{ij}} \quad (1\le i\le n)
$$
are not pairwise distinct: there exists $i_1\not=i_2$ with 
$$
\lambda_{i_1j}=\lambda_{i_2j} \quad\hbox{for $1\le j\le m$}.
$$
Therefore $g_{i_1}-g_{i_2}=c_{i_1}-c_{i_2}$, hence $s(g_{i_1})=s(g_{i_2})$, which means that $(i)$ in $(1)$ is not true.

\begin{proof} [Proof of Theorem $\ref{Th:SchanuelLiouville}$.]
Let $n$ and $m$ be integers such that $1\leq m\leq n$. We shall prove the assertion by induction on $m\geq 1$. Assume $m = 1$. We prove the result for all $n\geq 1$.  For each nonzero polynomial $P(X_0,X_1, \dots, X_n) \in \Q[X_0, \dots, X_n]$ in $n+1$ variables with rational coefficients, define a function 
$$
f_P: \R \rightarrow \R \mbox{ by } f_P(x) = P(x, e^x, \dots, e^{x^n}).
$$
Using Corollary $\ref{corollary:indalgexp}$, we deduce that the set $Z(f_P)$ of all real zeros of $f_p$, as the real zero locus of a non-zero complex analytic map  $f_P$,  is discrete in $\R$, hence that its complement is open and dense in $\R$. From Proposition $\ref{Proposition:alnia}$ and Baire's theorem, it follows that the set 
$$
E = 
\left\{\alpha\in\mathbbL\mid \quad \hbox{$e^{\alpha^j}\in\mathbbL $ for $j=1,\dots,n$}\right\} 
\cap
\bigcap_{P\in\Q[X_0, \dots, X_{n}]\setminus\{0\} } \left(\R\backslash Z(f_P)\right)
$$
is a $G_\delta$--subset of $\R$. Therefore, by  Corollary $\ref{Corollary:uncountable} $,  $E$ is uncountable. For any $\alpha \in E$, the numbers $\alpha, e^{\alpha}, e^{\alpha^2}, \dots, e^{\alpha^n}$ are in $\mathbbL$ and are algebraically independent over $\Q$. 
Since $\alpha$ is a Liouville number, $\alpha^2, \dots, \alpha^n$ are also Liouville numbers and $\alpha, \alpha^2, \dots, \alpha^n$ are linearly independent over $\Q$. From 
$$
\trd_{\Q}\Q(\alpha, \alpha^2, \dots, \alpha^n, e^\alpha, \dots, e^{\alpha^n}) = n+1
$$
we conclude that the assertion is true for $m =1$ and for all $n\geq 1$. 

\smallskip

Assume that $1< m \leq n$. Also, suppose the assertion is true for $m-1$ and for all $n\geq m-1$. In particular, the assertion is true for $m-1$ and $n-1$. Hence, there are uncountably many $n-1$ tuples $(\alpha_1, \dots, \alpha_{n-1}) \in \mathbbL^{n-1}$ such that $\alpha_1,\dots,\alpha_{n-1}$ are linearly independent over $\Q$,
$e^{\alpha_1}, \dots, e^{\alpha_{n-1}}$ are Liouville numbers 
 and 
\begin{equation}\label{Equation:n+m-2}
\trd_{\Q}\Q(\alpha_1, \dots, \alpha_{n-1}, e^{\alpha_1}, \dots, e^{\alpha_{n-1}}) = n+m-2.
\end{equation}
Choose such an $(n-1)$-tuple $(\alpha_1, \dots, \alpha_{n-1})$. 
Consider the subset $E$ of $\R$ which consists of all $\alpha \in \R$ such that 
$\alpha, e^\alpha$  are algebraically independent over \linebreak $\Q(\alpha_1, \dots \alpha_{n-1}, e^{\alpha_1}, \dots, e^{\alpha_{n-1}})$. 
If $P(X,Y) \in \Q(\alpha_1, \dots, \alpha_{n-1}, e^{\alpha_1}, \dots, e^{\alpha_{n-1}})[X,Y]$ is a polynomial,  define an analytic function $f(z) = P(z, e^z)$ in $\C$. Since $z, e^z$ are algebraically independent functions over $\C$ (by Corollary $\ref{corollary:indalgexp}$),  if $P$ is a nonzero polynomial, $f$ is a {\tt nonzero} function. Therefore, the set of zeros of $f$  in $\C$ is countable. Since there are only countably many polynomials $P(X, Y)$ with coefficients in the field $\Q(\alpha_1, \dots \alpha_{n-1}, e^{\alpha_1}, \dots, e^{\alpha_{n-1}})$, we conclude that $\R\backslash E$ is countable.  Therefore $F = E\cap \mathbbL$ is uncountable. 
For each $\alpha\in F$, the two numbers $\alpha, e^\alpha$ are algebraically independent over $\Q(\alpha_1, \dots, \alpha_{n-1}, e^{\alpha_1}, \dots, e^{\alpha_{n-1}})$. 
From $(\ref{Equation:n+m-2})$ 
we deduce 
$$
\trd_{\Q}\Q( \alpha_1, \dots, \alpha_{n-1},\alpha, e^{\alpha_1}, \dots, e^{\alpha_{n-1}}, e^\alpha) = n+m.
$$ 
This completes the proof of Theorem $\ref{Th:SchanuelLiouville}$. 
\end{proof}

 \goodbreak

\end{document}